\documentclass{amsart}
\frenchspacing
\usepackage{amssymb, mathrsfs, bbm}
\usepackage[T1]{fontenc}
\usepackage[sc, osf]{mathpazo}
\linespread{1.04}
\selectfont
\usepackage[protrusion=true, expansion=true]{microtype}

\usepackage[all, arc]{xy}
\SelectTips{eu}{}
\entrymodifiers={+!!<0pt,\fontdimen22\textfont2>}

\usepackage[pdftex, colorlinks=true, linkcolor=blue, citecolor=magenta, linktocpage]{hyperref}

\theoremstyle{plain}
\newtheorem{thm}[subsubsection]{Theorem}
\newtheorem{lemma}[subsubsection]{Lemma}
\newtheorem{prop}[subsubsection]{Proposition}
\newtheorem{cor}[subsubsection]{Corollary}

\theoremstyle{definition}

\theoremstyle{remark}

\theoremstyle{definition}

\numberwithin{equation}{subsubsection}

\def\cA{\mathcal{A}}

\def\cE{\mathcal{E}}

\def\cL{\mathcal{L}}

\def\cO{\mathcal{O}}

\def\11{\mathbf{1}}

\def\CC{\mathbf{C}}

\def\HH{\mathbf{H}}

\def\MM{\mathbf{M}}

\def\PP{\mathbf{P}} 
 
\def\RR{\mathbf{R}}

\def\ZZ{\mathbf{Z}}

\def\fn{\mathfrak{n}}

\def\C{\mathbf{C}}

\def\conv{\star}
\def\dim{\mathrm{dim}} 

\def\End{\mathrm{End}}

\def\Ext{\mathrm{Ext}}

\def\Hom{\mathrm{Hom}}

\def\Perv{\mathrm{Perv}}

\def\pH{\vphantom{H}^p\!H}

\def\T{\mathbf{T}}

\newcommand{\mapright}[1]{\xrightarrow{#1}}

\newcommand{\const}[1]{\underline{#1}}
\newcommand{\dgDer}[1]{\mathrm{dgDer-}{#1}}

\newcommand{\qtimes}[1]{\times^{#1}}
\newcommand{\ttimes}{\mathop{\widetilde{\boxtimes}}}

\title{Some geometric facets of the Langlands correspondence for real groups}
\author{R. Virk}
\begin{document}
\maketitle
\renewcommand{\thesubsection}{\arabic{subsection}}
%
%
%
\subsection{Introduction}
Let $G^{\vee}$ be a reductive algebraic group defined over $\RR$. The local Langlands correspondence \cite{L} describes the set of equivalence classes of irreducible admissible representations of $G^{\vee}(\RR)$ in terms of the Weil-Deligne group and the complex dual group $G$. Roughly, it partitions the set of irreducible representations into finite sets (L-packets) and then describes each L-packet. Since Langlands' original work, this has been refined in several directions. Most relevant to this document is the work of Adams-Barbasch-Vogan \cite{ABV}.

The key construction in \cite{ABV} is that of a variety (the parameter space) on which $G$ acts with finitely many orbits; each L-packet is re-interpreted as an orbit, and representations in the L-packet as the equivariant local systems supported on it. Adams-Barbasch-Vogan further demonstrate that the parameter space encodes significant character level information.

In \cite{So} W. Soergel has outlined a conjectural relationship between the geometric and representation theoretic categories appearing in \cite{ABV}. This relationship, roughly a type of Koszul duality, yields a conceptual explanation for the phenomena observed in \cite{ABV}.\footnote{
The localization theorem of Beilinson-Bernstein also establishes a relationship between representation theory and geometry. However, Soergel's approach is very different: localization leads to geometry on the group itself; Soergel's approach results in geometry on the dual group. 
}
The current note was born in an attempt to settle Conjectures 4.2.2, 4.2.3, 4.2.6 in \cite{So}, and Soergel's `Equivariant Formality' conjecture (implicit in \cite{So}; see \S\ref{s:formality} and \cite[\S0.2]{Lun}). 
These conjectures describe the structure of the geometric categories appearing in \cite{ABV};
the current document has little to say about Soergel's Basic Conjecture (relating the geometric categories to representation theory).

We `\emph{almost}' succeed (see \S\ref{s:formality}).
Soergel's conjectures are formulated in graded versions of our categories (in the sense of \cite[\S4]{So}). These `graded representation theories' are not constructed here because of a rather frustrating reason: we use the language of Hodge modules, and the category of Hodge modules is too large for the purposes of graded representation theory (see \S\ref{s:formality} and footnote \ref{note}).
The main result is Theorem \ref{main} describing the Hodge structure on equivariant $\Ext^{\bullet}$ between simple perverse sheaves on the parameter space.\footnote{
The parameter space is never explicitly mentioned. The translation between the symmetric varieties $G/K$ appearing below and the parameter space is provided by \cite[Proposition 6.24]{ABV}.
The parameter space is essentially a disjoint union of varieties of the form $G/K$.
} 
It yields a host of ancillary results which are of independent interest: Corollary \ref{extparity} and \ref{icparity} (`parity vanishing'); Theorem \ref{mqstable} and Corollary \ref{positivity} (`positivity' of a Hecke algebra module).
An informal discussion regarding Soergel's graded categories is contained in \S\ref{s:formality}.


\textbf{Acknowledgments:}
This work was conceived while I was visiting the Albert-Ludwigs-Universit\"at, Freiburg, in the summer of 2012. I am indebted to W. Soergel for his hospitality, his continuing explanations and patience.\footnote{\label{note}
In January, 2010, W. Soergel explained to me: \emph{``In a way, there should be a better category than what we work with, sort of much more motivic, where these problems disappear. Think about Grothendieck's conjecture: the action of Frobenius on the \`etale
cohomology of a smooth projective variety should be semisimple! So this
non-semisimplicity is sort of due to the fact we are not working with motives,
but with some rather bad approximation, I suggest."} At my glacial pace it has taken me four years to appreciate this (see \S\ref{s:formality}).
}
I am further grateful to W. Soergel and M. Wendt for sharing their beautiful ideas on the use of motivic sheaves in representation theory.

\subsection{Conventions}\label{s:conventions}
Throughout, `variety' = `separated reduced scheme of finite type over $\mathrm{Spec}(\CC)$'.
A \emph{fibration} will mean a morphism of varieties which is locally trivial (on the base) in the \`etale sense. 
Constructible sheaves, cohomology, etc. will always be with $\RR$ or $\CC$ coefficients, and with respect to the complex analytic site associated to a variety.

Given an algebraic group $G$, we write $G^0$ for its identity component. 
Suppose $G$ acts on $X$. Then we write $G_x$ for the isotropy group of a point $x\in X$. Given a principle $G$-fibration $E\to B$, we write $E\qtimes{G}X \to B$ for the associated fibration.\footnote{
Generally, $E\qtimes{G} X$ is only an algebraic space. It is a variety if, for instance, $X$ is quasi-projective with linearized $G$-action; or $G$ is connected and $X$ can be equivariantly embedded in a normal variety (Sumihiro's Theorem). 
One of these assumptions will always be satisfied below.
}

We write $D_G(X)$ for the $G$-equivariant derived category (in the sense of \cite{BL}), and $\Perv_G(X)\subseteq D_G(X)$ for the abelian subcategory of equivariant perverse sheaves on $X$. Perverse cohomology is denoted by $\pH^*$.
Change of group functors (restriction, induction equivalence, quotient equivalence, etc.) will often be omitted from the notation. All functors between derived categories will be tacitly derived.
Both the functor of $G$-equivariant cohomology as well as the $G$-equivariant cohomology ring of a point will be denoted by $H^*_G$.
\subsection{$\mathbf{B\backslash G/K}$}\label{s:bgk}
Let $G$ be a connected reductive group, $\theta\colon G\to G$ a non-trivial algebraic involution, $T$ a $\theta$-stable maximal torus, and $B\supseteq T$ a $\theta$-stable Borel containing it (such a pair $(B,T)$ always exists, see \cite[\S7]{St}).
Write $W$ for the Weyl group. 
Let $K = G^{\theta}$ denote the fixed point subgroup.
Then
\begin{enumerate}
\item $K$ is reductive (but not necessarily connected, see \cite[\S1]{V});
\item $|B\backslash G/K| < \infty$ (a convenient reference is \cite[\S6]{MS});
\item $K$-orbits in $G/B$ are affinely embedded (see \cite[Ch. H, Proposition 1]{M});
\item for each $x\in G/B$, the component group $K_x/K_x^0$ has exponent $2$ \cite[Proposition 7]{V}.
\end{enumerate}

Our primary concern is the category $D_{B\times K}(G)$, for the $B\times K$-action given by
$(b,k)\cdot g = bgk^{-1}$. 
The evident identification of $B\times K$-orbits in $G$, with $B$-orbits in $G/K$, and with $K$-orbits in $G/B$, respects closure relations.
There are corresponding identifications:
$D_B(G/K) = D_{B\times K}(G) = D_K(G/B)$.
These identifications will be used without further comment.

Let $s\in W$ be a simple reflection, $P\supseteq B$ the corresponding minimal parabolic,
and $v$ a $B$-orbit in $G/K$. Then the subvariety $P\cdot v \subseteq G/K$ contains a unique open dense $B$-orbit $s\star v$. Let $\leq$ denote the closure order on orbits, i.e.,
$v\leq w$ if and only if $v$ is contained in the closure $\overline{w}$.
\begin{thm}[{\cite[Theorem 4.6]{RS}}]\label{order}
If $w\in B\backslash G/K$ is not closed, then there exists a simple reflection $s\in W$, and $v\in B\backslash G/K$ such that $v\lneq w$ and $s\star v= w$.
\end{thm}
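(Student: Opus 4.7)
Via the identification $B\backslash G/K = K\backslash G/B$ of Section \ref{s:bgk}, I view $w$ as a $K$-orbit on $G/B$. The strategy has two parts: first, exhibit a codimension-one $K$-orbit $v$ in the boundary of $\overline{w}$; second, produce a simple reflection $s$ with $s\star v = w$.

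The first part is immediate. Since $w$ is not closed, $\overline{w}\setminus w$ is a non-empty, closed, $K$-stable subvariety of $G/B$, and I take $v$ to be any $K$-orbit open in it, so $\dim v = \dim w - 1$.

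For the second part, pick $y\in v$ smooth in $\overline{w}$, and identify $T_y(G/B) = \fg/\fb_y$, where $\fb_y\subseteq\fg$ is the Borel subalgebra fixing $y$. Then $T_y v = (\fk+\fb_y)/\fb_y$, and the strict inclusion $T_y v \subsetneq T_y\overline{w}$ cuts out a one-dimensional normal line $L\subseteq \fg/(\fk+\fb_y)$. After $K$-conjugation to the setup of Section \ref{s:bgk}, one may assume a $\theta$-stable maximal torus sits inside $B_y$; then $\fg/\fb_y$ decomposes as $\bigoplus_{\alpha>0}\fg_{-\alpha}$, so that $L$ lifts to a sum of root vectors. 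A rank-one analysis inside the subgroup $\langle U_\alpha, U_{-\alpha}\rangle$, combined with the exponent-two property (iv) of Section \ref{s:bgk} applied to the component group $K_y/K_y^0$, produces a \emph{simple} positive root $\alpha$ whose root space contributes non-trivially to $L$. Setting $s = s_\alpha$, the minimal parabolic $P_s$ fails to stabilize $v$, hence $P_s\cdot v$ is irreducible of dimension $\dim v + 1 = \dim w$; tangent-space transversality at $y$ places $P_s \cdot v$ inside $\overline{w}$, and its unique open $K$-orbit $s\star v$ must coincide with $w$.

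\textbf{Main obstacle.} The substantive point is arranging $\alpha$ to be \emph{simple} rather than merely positive: a single tangent direction yields only the normal line $L$, which a priori could be supported on root vectors of arbitrary height. Extracting a simple root demands genuine input from the symmetric-pair structure of $(G,K)$, and this is where the Richardson--Springer case analysis of minimal-parabolic sweeps on individual orbits (``types I--IV'') does the real work. Without it, one obtains only a reflection (not necessarily simple), which is insufficient for the statement.
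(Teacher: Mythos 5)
The paper does not prove this statement; it is imported directly as \cite[Theorem 4.6]{RS}, so there is no in-paper argument to compare against. Judging the proposal on its own merits: you are candid that the decisive step --- passing from a normal direction $L$ to a \emph{simple} root --- is left to ``the Richardson--Springer case analysis.'' That is not a small omission to defer: it is essentially the entire theorem, and the preceding reductions do not make it easier. But there are also unacknowledged problems upstream of that gap.

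First, the opening claim that some $K$-orbit $v$ open in $\overline{w}\setminus w$ satisfies $\dim v = \dim w - 1$ is not automatic; boundaries of orbit closures in a $G$-variety can have codimension greater than one, and in this setting the codimension-one property is most naturally \emph{deduced} from the theorem you are proving (once $s\star v = w$, the orbit $v$ is forced to be a divisor in $P_s\cdot v\subseteq\overline{w}$), so invoking it as input risks circularity. Second, the choice of ``$y\in v$ smooth in $\overline{w}$'' need not exist: $K$-orbit closures in $G/B$ are in general not normal, so the singular locus of $\overline{w}$ can contain codimension-one components, and if $v$ is one of them, every point of $v$ is singular and the tangent-space computation collapses. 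Third, even granting all this, the sentence ``tangent-space transversality at $y$ places $P_s\cdot v$ inside $\overline{w}$'' is not a proof: a tangency condition at a single point does not force the $(\dim w)$-dimensional irreducible variety $P_s\cdot v$ to lie in $\overline{w}$, and without that containment there is no reason its open orbit $s\star v$ should equal $w$ rather than some other orbit of the same dimension. So the proposal has roughly the right geometric intuition (sweep $v$ by a minimal parabolic and do a rank-one $SL_2$-analysis), but the argument as written is circular at the first step, has a possible failure at the smoothness step, and has an unjustified containment at the last step, on top of the main gap you explicitly concede.
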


Let $\pi\colon G/B \to G/P$ be the evident projection. 
Let $x\in G/B$. Set $y=\pi(x)$, and $L_x^s = \pi^{-1}(y)$. Note: $L_x^s\simeq \PP^1$.
\begin{equation}\label{defpi}\tag{$\ast$}\begin{gathered}
\xymatrix{
\xy (0,0)*\xycircle(8,8){-};
(0,0)*\ellipse(8,2){.};
(0,0)*\ellipse(8,2)__,=:a(-180){-};
\endxy\ar[d]\ar[r]^-{\sim} & L_x^s\ar[d]\ar[r] & G/B\ar[d]^-{\pi} \\  
\bullet\ar[r] & \{y\}\ar[r] & G/P 
}\end{gathered} \end{equation}
The $K$-action induces an isomorphism
$K \qtimes{K_y} L_x^s \mapright{\sim} K\cdot L_x^s$.
Thus,
\[ D_K(K\cdot L_x^s) = D_K(K\qtimes{K_y} L_x^s) = D_{K_y}(L_x^s) = D_{K_y}(\PP^1).\footnote{
This is the analogue of the Lie theoretic principle that `local phenomena is controlled by $SL_2$'.
}
\]

As $|B\backslash G/K|<\infty$, the image of $K_y$ in $\mathrm{Aut}(L^s_x)$ has dimension $\geq 1$.
Identify $\PP^1$ with $\CC\sqcup \{\infty\}$. Modulo conjugation by an element of $\mathrm{Aut}(L^s_x)\simeq PGL_2$, there are four possibilities for the decomposition of $\PP^1$ into $K_y$-orbits:
\begin{description}
\item[Case G] 
$\PP^1$ (the action is transitive);
\[
\xy 
(0,0)*\xycircle(8,8){-};
(0,0)*\ellipse(8,2){.};
(0,0)*\ellipse(8,2)__,=:a(-180){-};
\endxy
\]
\item[Case U] 
$\PP^1 = \CC \sqcup \{\infty\}$;
\[
\xy
(0,8)*{\bullet};
(0,0)*\xycircle(8,8){-};
(0,0)*\ellipse(8,2){.};
(0,0)*\ellipse(8,2)__,=:a(-180){-};
\endxy
\]
\item[Case T] 
$\PP^1 = \{0\} \sqcup \CC^* \sqcup \{\infty\}$;
\[
\xy
(0,8)*{\bullet};
(0,-8)*{\bullet};
(0,0)*\xycircle(8,8){-};
(0,0)*\ellipse(8,2){.};
(0,0)*\ellipse(8,2)__,=:a(-180){-};
\endxy
\]
\item[Case N] 
$\PP^1 = \{0,\infty\}\sqcup\CC^*$; both $\{0\}$ and $\{\infty\}$ are fixed points of $K_y^0$.
\[
\xy
(0,8)*{\bullet};
(0,-8)*{\bullet};
**\crv{~*=<2mm>{.} (-10,-12) & (-20,0) & (-10, 12)} ?(.5)*\dir{>};
(0,8); (0,-8); **\crv{~*=<2mm>{.} (10,-12) & (20,0) & (10, 12)} ?(.5)*\dir{<};
(0,0)*\xycircle(8,8){-};
(0,0)*\ellipse(8,2){.};
(0,0)*\ellipse(8,2)__,=:a(-180){-};
\endxy
\]
\end{description}
%
We will say that $w$ is of type \textbf{G}, \textbf{U}, \textbf{T} or \textbf{N} relative to $s$ depending on which of these decompositions actually occurs.

Given an irreducible equivariant local system $V_{\tau}$ on a $K$-orbit $j\colon w\hookrightarrow G/B$, set
\[  \cL_{\tau} = j_{!*}V_{\tau}[d_{\tau}], \quad\mbox{where}\quad d_{\tau} = \dim(w). \]
Call $\cL_{\tau}$ \emph{clean} if $\cL_{\tau} \simeq j_!V_{\tau}[d_{\tau}]$. 
Call $\cL_{\tau}$ \emph{cuspidal}
if for each simple reflection $s$, each $v\neq w$ with $s\star v = w$, and each $K$-equivariant local system $V_{\gamma}$ on $v$, the object $\cL_{\tau}$ does \emph{not} occur as a direct summand of $\pH^*(\pi^*\pi_*\cL_{\gamma})$, where $\pi$ is as in \eqref{defpi}.\footnote{
The term `cuspidal' has a very specific meaning in representation theory. It is not clear to me whether the terminology is completely justified in the current geometric setting.}
\begin{lemma}[{\cite[Lemma 7.4.1]{MS}}]\label{clean}
Cuspidals are clean.
\end{lemma}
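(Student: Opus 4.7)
We prove the contrapositive: if $\cL_{\tau}$ is not clean, then $\cL_{\tau}$ is not cuspidal. Suppose the natural map $j_!V_{\tau}[d_{\tau}] \to j_{!*}V_{\tau}[d_{\tau}] = \cL_{\tau}$ is not an isomorphism. Then $\cL_{\tau}$ has non-zero stalks on $\partial w = \overline{w} \setminus w$, so $w$ is not closed, and Theorem \ref{order} supplies a simple reflection $s$ and an orbit $v \lneq w$ with $s\star v = w$. The plan is to use this $s$ and $v$ to build a local system $V_{\gamma}$ on $v$ such that $\cL_{\tau}$ appears as a summand of $\pH^*(\pi^*\pi_*\cL_{\gamma})$, violating cuspidality.

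The main tool is the local geometry of $\pi\colon G/B \to G/P$. Because $\pi$ is a smooth proper $\PP^1$-fibration, $\pi^*[1] \simeq \pi^![-1]$ is $t$-exact on perverse sheaves, and $\pi_* = \pi_!$ preserves semisimple complexes. Thus $\pi^*\pi_*\cL_{\gamma}$ decomposes as a direct sum of shifted simple perverse sheaves, each supported on $\overline{P\cdot v} = \overline{w}$. To identify these summands, restrict to a slice $K\qtimes{K_y}L_x^s \to K/K_y$ with $x \in v$ and $y = \pi(x)$; here $K_y$ acts on $L_x^s \simeq \PP^1$ via one of the four patterns \textbf{G}, \textbf{U}, \textbf{T}, \textbf{N}. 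Since $s\star v = w \neq v$, case \textbf{G} is excluded. In each of the remaining cases, explicit projection-formula and base-change computations on $\PP^1$, combined with the description of $K$-equivariant local systems in terms of the (exponent-$2$) component groups $K_y/K_y^0$ and $K_x/K_x^0$ supplied by item (iv) of \S\ref{s:bgk}, yield a precise formula for the local system that $\pi^*\pi_*\cL_{\gamma}$ restricts to on $w$.

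The final step is to choose $\gamma$. We pick $v$ maximal among orbits in $\partial w$ on which $\cL_{\tau}$ has non-vanishing restriction (shrinking $w$ to this $v$ via Theorem \ref{order} if necessary, so that $s\star v = w$). By the IC characterization, the non-vanishing of $\cL_{\tau}|_v$ forces compatibility between $V_{\tau}$ (on $w$) and some $K$-equivariant local system on $v$; we let $V_{\gamma}$ be the latter. Combined with the case-by-case formula from the previous paragraph, this exhibits $V_{\tau}$ as a summand of the local system that $\pi^*\pi_*\cL_{\gamma}$ produces on the open orbit $w$, and hence (by semisimplicity and support considerations) $\cL_{\tau}$ as a summand of some $\pH^i(\pi^*\pi_*\cL_{\gamma})$.

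The main obstacle is the last step: ensuring that every local system $V_{\tau}$ on $w$ which could appear in a non-clean IC actually arises, through the slice geometry, from some $V_{\gamma}$ on $v$. This requires tracing the specialization map of component groups $K_x/K_x^0 \to K_y/K_y^0$ in each of the cases \textbf{U}, \textbf{T}, \textbf{N} and verifying surjectivity of the relevant induction/restriction of representations. The exponent-$2$ property of these groups and the transitive $K_y^0$-action on $L_x^s \setminus \{\text{fixed points}\}$ in cases \textbf{U}, \textbf{T}, \textbf{N} are precisely what make this bookkeeping tractable.
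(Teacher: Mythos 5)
Your route via the contrapositive is reasonable in spirit, but there is a genuine gap at the step where you choose $v$. You want an orbit $v \subset \partial w$ on which $\cL_{\tau}$ restricts nontrivially \emph{and} which satisfies $s\star v = w$ for some simple reflection $s$, so that the cuspidality condition can be invoked against the pair $(s,v)$. Theorem~\ref{order} does not deliver this: it only says that \emph{some} $v' \lneq w$ has $s\star v' = w$, and that $v'$ has no reason to be one of the orbits on which $\cL_{\tau}$ has a nonzero stalk. Conversely, an orbit $v$ maximal subject to $\cL_{\tau}|_v \neq 0$ need not be reachable from $w$ by a single step $s\star(-)$; it may only be connected to $w$ through a chain on which $\cL_{\tau}$ happens to vanish. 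Your parenthetical ``shrinking $w$ to this $v$ via Theorem~\ref{order}'' does not make sense as written and does not close this gap. The paper avoids this difficulty by arguing the forward implication directly: assuming cuspidality, it first shows (by elimination of cases \textbf{G} and \textbf{U}) that for every $s$ with $s\star w=w$ and $|P_s\cdot w|>1$ the orbit $w$ is of type \textbf{T} or \textbf{N}, then invokes the structural result $P_I\cdot w = \overline{w}$ from \cite[\S7.2.1]{MS} for the set $I$ of such $s$. This guarantees that \emph{every} codimension-one orbit $v$ in $\overline{w}$ satisfies $s\star v = w$ for some $s\in I$, giving the base case, and higher-codimension orbits are handled by an upward induction: if $(j_*V_\tau)|_v \neq 0$ then $(j_*V_\tau)|_{s\star v}\neq 0$ (with $s\in I$, $s\star v > v$), which is impossible by the inductive hypothesis. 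You make no use of the $P_I$ result, and without it your construction of a witness $(s,v,\gamma)$ to non-cuspidality is unsupported.

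Beyond that, your case-by-case analysis (the ``explicit projection-formula and base-change computations'' and the ``specialization map of component groups'') is named but not actually performed, and you acknowledge this yourself in the final paragraph. That is exactly the content that needs to be supplied; in the paper it is precisely the inspection of cases \textbf{T} and \textbf{N} that produces the vanishing, and the cuspidality hypothesis also forces the relevant monodromy to be nontrivial in those cases, without which the vanishing on $\PP^1$ fails. Your framing observations (properness and smoothness of $\pi$, the slice $K\qtimes{K_y}L_x^s$, exclusion of case \textbf{G}) are all correct; the argument as a whole, however, is not complete.
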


\begin{proof}
As indicated, this is \cite[Lemma 7.4.1]{MS}. Regardless, the language employed in \cite{MS} is a bit different from ours, so we sketch the argument in order to orient the reader.

Let $j\colon w\hookrightarrow G/B$ be a $K$-orbit, and $V_{\tau}$ a local system on $w$ such that $\cL_{\tau}$ is cuspidal. Write $\overline{w}$ for the closure of $w$.
To demonstrate the assertion we need to show that $(j_*V_{\tau})|_v = 0$ for each orbit $v$ in $\overline{w} - w$.

If $s$ is a simple reflection such that $s\star w = w$ and $P_s\cdot w$ contains an orbit other than $w$, then as $\cL_{\tau}$ is cuspidal, $w$ must be of type \textbf{T} or \textbf{N} relative to $s$. In the language of \cite{MS}, this means that each such $s$ is `of type IIIb or IVb for $w$'. Let $I$ be the set consisting of simple reflections $s$ as above and let $P_I$ be the parabolic subgroup of $G$ containing $B$ and corresponding to $I$. Then in \cite[\S7.2.1]{MS} it is shown that $P_I\cdot w = \overline{w}$.

Now if $v$ is an orbit of codimension $1$ in $\overline{w}$, then
there exists $s\in I$ such that $s\star v = w$.
Inspecting the cases \textbf{T} and \textbf{N} yields the required vanishing in this case.

For arbitrary $v$, proceed by induction on codimension. Let $s\in I$ be such that $s\star v > v$. Let $\pi$ be as in \eqref{defpi}. As $\cL_{\tau}$ is cuspidal, $\pi_*(j_*V_{\tau}) =0$. Furthermore, if $(j_*V_{\tau})|_v\neq 0$, then $\pi_*((j_*V_{\tau})|_v)\neq 0$. All of this follows by inspection of the cases \textbf{G}, \textbf{U}, \textbf{T}, \textbf{N}. Combined these vanishing and non-vanishing statements imply $(j_*V_{\tau})|_v\neq 0$ only if $(j_*V_{\tau})|_{s\star v}\neq 0$. Thus, applying the induction hypothesis yields the result.
\end{proof}
\subsection{Mixed structures}\label{s:mixed}
Given a variety $X$,
write $M(X)$ for the category of $\RR$-mixed Hodge modules on $X$, and $DM(X)$ for its bounded derived category \cite{Sa}. If a linear algebraic group acts on $X$, write $DM_G(X)$ for the corresponding mixed equivariant derived category.
When dealing with mixed as well as ordinary categories, objects in mixed categories will be adorned with an $\vphantom{V}^H$. Omission of the $\vphantom{V}^H$ will denote the classical object underlying the mixed structure.

A mixed Hodge structure is called \emph{Tate} if it is a successive extension of Hodge structures of type $(n,n)$.
A mixed Hodge module $\cA^H\in M(X)$ will be called \emph{$\ast$-pointwise Tate} if, for each point $i\colon \{x\}\hookrightarrow X$, the stalk $H^*(i^*\cA^H)$ is Tate.
Call $\cA^H\in DM(X)$ $\ast$-pointwise Tate if each $\pH^i(\cA^H)$ is so.
An object of $DM_G(X)$ is $\ast$-pointwise Tate if it is so under the forgetful functor $DM_G(X)\to DM(X)$.
\begin{lemma}\label{s:tatepistable}
Let $\pi$ be as in \eqref{defpi}.
Then $\pi^*\pi_*$ preserves the class of $\ast$-pointwise Tate objects.
\end{lemma}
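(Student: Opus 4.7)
The plan is to reduce, via proper base change, to a cohomology computation on the $\PP^1$-fibers of $\pi$, and then exploit the fact that $\PP^1$ is ``Tate'' in a strong sense.

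First, since $\pi\colon G/B\to G/P$ is proper (in fact projective) with fibers isomorphic to $\PP^1$, proper base change in the category of mixed Hodge modules gives, for any $x\in G/B$ with $y = \pi(x)$ and fiber $L = \pi^{-1}(y)$,
$$i_x^*\,\pi^*\pi_*\cA^H \;\simeq\; i_y^*\,\pi_*\cA^H \;\simeq\; R\Gamma(L,\,\cA^H|_L).$$
So $\pi^*\pi_*\cA^H$ is $\ast$-pointwise Tate iff $R\Gamma(L,\cA^H|_L)$ is a Tate Hodge structure for every fiber $L\simeq\PP^1$; and $\cA^H|_L$ remains $\ast$-pointwise Tate since pointwise Tate is inherited by restriction to any subvariety. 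Thus the assertion reduces to the following claim: if $F\in DM(\PP^1)$ is $\ast$-pointwise Tate, then $R\Gamma(\PP^1, F)$ is Tate.

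To prove this claim I would stratify $\PP^1$ finely enough that $F$ is constructible along the stratification; the strata are of only two kinds, closed points and open subsets of $\PP^1$ (each the complement of a finite set). The standard iterated-cone filtration coming from such a stratification, together with the fact that Tate mixed Hodge structures form a Serre subcategory of all mixed Hodge structures, reduces the claim to showing $R\Gamma_c(S, F|_S)$ is Tate for each stratum $S$. For $S$ a point this is immediate from the hypothesis. For $S$ open the restriction $F|_S$ is a variation of Hodge structure with every stalk Tate, hence a Tate variation; since $S$ is either $\PP^1$ itself (simply connected, with $H^*(\PP^1)$ already Tate) or an affine curve with free fundamental group, $H_c^*(S, F|_S)$ can be computed directly from the monodromy action and sits inside Tate twists of (sub)quotients of the fiber.

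I expect the principal technical point to be verifying carefully that the $H^*_c$-computation on affine curves with Tate-VHS coefficients stays within the Tate category — i.e., that invariants, coinvariants, and the shift/Tate twist from Poincar\'e duality respect Tate type at the level of mixed Hodge modules, not just at the level of underlying vector spaces. Equivalently, one may argue by induction on the dimension of the support of $F$ using the cellular decomposition $\PP^1 = \CC \sqcup \{\infty\}$: the long exact sequence relating $H^*_c(\CC, F|_\CC)$, $H^*(\PP^1, F)$, and the Tate stalk $F|_{\{\infty\}}$, combined with the fact that $\AA^1$ is contractible so that local systems on $\AA^1$ minus a finite set are particularly tractable, lets one conclude by iterating.
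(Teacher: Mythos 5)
Your first reduction is exactly the paper's: proper base change identifies the stalk of $\pi^*\pi_*\cA^H$ at $x$ with $R\Gamma(L^s_x;\cA^H|_{L^s_x})$, so one only needs that a $\ast$-pointwise Tate object on $L^s_x\simeq\PP^1$ has Tate cohomology. From there you and the paper diverge, and the divergence matters.

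The paper's proof is two lines precisely because it keeps the $K_y$-equivariance. After base change one lands in $M_{K_y}(L^s_x)$, and the possible $K_y$-actions on $\PP^1$ are exactly the four cases \textbf{G}, \textbf{U}, \textbf{T}, \textbf{N} already catalogued in \S\ref{s:bgk}: there are at most three orbits, the open orbit is $\PP^1$, $\CC$ or $\CC^*$, and on each orbit an equivariant local system has monodromy factoring through a component group of exponent $2$. With those constraints the computation of $H^*(\PP^1;\cA^H)$ from the orbit filtration is genuinely immediate --- the open stratum contributes either the constant cohomology of $\PP^1$ or $\CC$, or (in cases \textbf{T}, \textbf{N}) the cohomology of $\CC^*$ with coefficients in a local system of order-$\leq 2$ monodromy, which is either Tate or vanishes. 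You instead discard the equivariance and try to prove the stronger unconditional claim that \emph{any} $\ast$-pointwise Tate object of $DM(\PP^1)$ has Tate global cohomology, with an arbitrary (non-orbit) stratification. That forces you to control $H^*_c$ of an arbitrary Zariski-open curve with coefficients in an arbitrary admissible, pointwise-Tate variation of mixed Hodge structure, and this is where your argument stops being a proof: the assertion that this cohomology ``sits inside Tate twists of (sub)quotients of the fiber'' is a description, not a verification --- the mixed Hodge structure on $H^1_c$ is not a subquotient of a stalk, and for nontrivial unipotent variations one has to actually track the weight and Hodge filtrations through the monodromy weight spectral sequence. You flag this yourself as ``the principal technical point,'' but it is left unaddressed, so the claim you reduced to is not established. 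Reinstating the $K_y$-equivariance, as the paper does, dissolves the difficulty: the orbit-by-orbit analysis replaces the general VMHS computation with finitely many explicit, elementary cases.
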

\begin{proof}Use the notation surrounding \eqref{defpi}. Then the assertion reduces to the claim that if $\cA^H\in M_{K_y}(L_x^s)$ is $\ast$-pointwise Tate, then $H^*(L_x^s; \cA^H)$ is Tate.\footnote{
The core of this argument is due to R. MacPherson (in a `parity vanishing' context for Schubert varieties), see \cite[Lemma 3.2.3]{So00}.
}
This is immediate from the possible $K_y$-orbit decompositions \textbf{G}, \textbf{U}, \textbf{T} and \textbf{N}.
\end{proof}

Each irreducible $B\times K$-equivariant local system $V_{\tau}$, on an orbit $w$, underlies a unique (up to isomorphism) polarizable variation of Hodge structure of weight zero. Denote this variation by $V^H_{\tau}$. Taking intermediate extension, we obtain a pure (equivariant) Hodge module $\cL^H_{\tau}$ of weight $d_{\tau} = \dim(w)$, i.e.,
\[ \cL^H_{\tau} = j_{!*}V^H_{\tau}[d_{\tau}], \]
where $j\colon w \hookrightarrow G$ is the inclusion.
\begin{prop}\label{ptwise}$\cL_{\tau}^H$ is $\ast$-pointwise Tate.
\end{prop}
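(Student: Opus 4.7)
My plan is to induct on $d_{\tau} = \dim w$, distinguishing at each step whether $\cL_{\tau}^H$ is cuspidal or not. Note that closed orbits are automatically cuspidal: Theorem \ref{order} says a non-closed $w$ admits $v\lneq w$ with $s\star v = w$, so when $w$ is closed there is no such $v$ and the cuspidal condition holds vacuously.

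For the cuspidal case, Lemma \ref{clean} gives $\cL_{\tau}^H \cong j_!V_{\tau}^H[d_{\tau}]$: stalks off $w$ vanish, while stalks along $w$ reduce to stalks of $V_{\tau}^H$. Property (iv) of \S\ref{s:bgk} says $K_x/K_x^0$ has exponent $2$, so every irreducible equivariant local system $V_{\tau}$ is rank one with monodromy of order at most two. The unique polarizable weight-zero variation $V_{\tau}^H$ extending such a rank-one local system has each fiber of Hodge type $(0,0)$, hence Tate. Thus cuspidal $\cL_{\tau}^H$ is $\ast$-pointwise Tate.

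For the inductive step, suppose $\cL_{\tau}^H$ is not cuspidal. By definition there exist a simple reflection $s$, an orbit $v\neq w$ with $s\star v = w$, and a local system $V_{\gamma}$ on $v$ such that $\cL_{\tau}$ is a direct summand of $\pH^*(\pi^*\pi_*\cL_{\gamma})$. Since $v\lneq w$, $\dim v < \dim w$, so by induction $\cL_{\gamma}^H$ is $\ast$-pointwise Tate; Lemma \ref{s:tatepistable} then yields that $\pi^*\pi_*\cL_{\gamma}^H$ is $\ast$-pointwise Tate. Since $\pi$ is smooth and proper, $\pi^*\pi_*\cL_{\gamma}^H$ is pure, so Saito's decomposition theorem splits it into a direct sum of shifts and Tate twists of simple pure Hodge modules of the form $\cL_{\tau'}^H(m)[n]$. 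By the uniqueness (recalled in \S\ref{s:mixed}) of the polarizable weight-zero variation underlying $V_{\tau}$, the classical summand $\cL_{\tau}$ forces a mixed-Hodge summand of the form $\cL_{\tau}^H(m)[n]$ in this decomposition. Direct summands, shifts, and Tate twists all preserve $\ast$-pointwise Tateness, so $\cL_{\tau}^H$ is $\ast$-pointwise Tate, completing the induction.

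The main obstacle I anticipate is precisely the last step: turning the classical summand $\cL_{\tau}\hookrightarrow \pH^*(\pi^*\pi_*\cL_{\gamma})$ into a Hodge-module summand. This rests on Saito's semisimplicity theorem for proper pushforward of pure Hodge modules, combined with the fact that each simple equivariant perverse sheaf $\cL_{\tau}$ has a unique pure Hodge-module enhancement. A secondary technical point is checking that the $\ast$-pointwise Tate property descends to direct summands, which is immediate since the category of Tate Hodge structures is closed under taking summands.
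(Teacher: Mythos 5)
Your proof is correct and takes essentially the same approach as the paper's (cuspidal/non-cuspidal dichotomy, with cleanness handling cuspidals and Lemma~\ref{s:tatepistable} driving the induction). You supply two details the paper leaves implicit: the observation that property (iv) of \S\ref{s:bgk} forces each $V_\tau$ to be rank one, hence $V_\tau^H$ fiberwise of type $(0,0)$, so that cleanness alone really does give $\ast$-pointwise Tateness for cuspidals; and the use of Saito's decomposition/semisimplicity theorem plus uniqueness of the Hodge enhancement to promote the classical summand $\cL_\tau$ of $\pH^*(\pi^*\pi_*\cL_\gamma)$ to a Hodge-module summand $\cL_\tau^H(m)[n]$ of $\pi^*\pi_*\cL_\gamma^H$.
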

\begin{proof}Work in $G/B$. The statement is true for cuspidals, since they are clean (Lemma \ref{clean}). The general case follows by induction (employing Theorem \ref{order}) and Lemma \ref{s:tatepistable}.
\end{proof}
\begin{prop}\label{contracting}Let $i\colon v\hookrightarrow G$ be the inclusion of a $B\times K$-orbit. Then $i^*\cL^H_{\tau}$ is pure.
\end{prop}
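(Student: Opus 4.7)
The plan is to use a contracting one-parameter subgroup to reduce the purity of $i^*\cL^H_\tau$ to the $\ast$-pointwise Tate statement of Proposition~\ref{ptwise} on a smaller stratum.

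I would choose a sufficiently generic cocharacter $\lambda\colon\CC^*\to T$ lying in the Weyl chamber determined by $B$, and consider the action of $\CC^*$ on $G$ by left multiplication via $\lambda$. Since $\lambda(\CC^*)\subseteq T\subseteq B$, this is a subaction of the left $B$-action, so $\cL^H_\tau$ inherits a $\CC^*$-equivariant structure. For generic $\lambda$, the induced $\CC^*$-action on the $B\times K$-orbit $v$ admits an attracting affine bundle structure $p\colon v\to v^{\CC^*}$, where $v^{\CC^*}\subseteq\overline{v}$ denotes the fixed locus meeting $v$. By the contraction principle for $\CC^*$-equivariant mixed Hodge modules on an attracting space, $i^*\cL^H_\tau\simeq p^*(i_0^*\cL^H_\tau)$, up to the perverse-degree shift accounting for the rank of $p$, where $i_0\colon v^{\CC^*}\hookrightarrow G$ is the inclusion.

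Since pullback along the smooth affine bundle $p$ preserves purity, I am reduced to showing purity of $i_0^*\cL^H_\tau$. This I would handle by induction on $\dim \overline{v}$: either $v^{\CC^*}$ is strictly lower-dimensional (so the inductive hypothesis applies) or $v$ is itself pointwise fixed by $\lambda(\CC^*)$. In the latter base case, $v$ is contained in a Levi-type subgroup where the $B\times K$-orbit structure simplifies; equivariance together with the $\ast$-pointwise Tate condition (Proposition~\ref{ptwise}) and the weight bound imposed by purity of $\cL^H_\tau$ of weight $d_\tau$ pin $i^*\cL^H_\tau$ down to a direct sum of shifted pure Tate local systems of weight $d_\tau$.

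The main obstacle is invoking the contraction principle in the mixed Hodge module setting with the correct functoriality --- this is standard in the constructible and $\ell$-adic contexts but requires care to transfer to Hodge modules --- together with verifying that a single generic cocharacter $\lambda$ witnesses the desired attracting behavior on all $B\times K$-orbits of $G$. A secondary technical point is executing the base case, where one must rule out mixed-weight extensions among equivariant Tate local systems on a $\lambda$-fixed orbit; this should follow from a further contraction within the relevant Levi or from the analysis of $K_y$-orbit types \textbf{G}, \textbf{U}, \textbf{T}, \textbf{N} used in Lemma~\ref{s:tatepistable}.
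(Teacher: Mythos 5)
Your approach is genuinely different from the paper's, and it contains a serious gap. The paper's argument (via Mars--Springer) is local: for each point of a $B$-orbit $v$ in $G/K$ there is a \emph{transversal slice} $S$ through that point, preserved by a $\CC^*$-action that contracts $S$ to the point. The stalk of $\cL^H_\tau$ at the point is then computed by $H^*(S;\cL^H_\tau|_S)$, and purity follows from the now-standard weight arguments in Kazhdan--Lusztig/Soergel/Mars--Springer. You instead propose a \emph{global} $\CC^*$-action (left multiplication by a generic cocharacter $\lambda$ in $T$) and hope to contract each orbit onto its fixed locus.

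The central gap is the inductive step. Even granting a contraction $p\colon v\to v^{\CC^*}$ and the isomorphism $i^*\cL^H_\tau\simeq p^*(i_0^*\cL^H_\tau)$, the fixed locus $v^{\CC^*}$ is \emph{not} a union of $B\times K$-orbits --- it is invariant only under the centralizer of $\lambda$, not under all of $B$ --- so the inductive hypothesis stated for orbit inclusions $i\colon v\hookrightarrow G$ does not apply to $i_0$. You are left with an object supported on a set that the theorem says nothing about. A secondary problem is that for a $B$-orbit $v\simeq B/B_x$ in $G/K$, the generic $\CC^*$ has fixed points in $v$ precisely when $B_x$ contains a maximal torus of $B$, which fails in general; so $v^{\CC^*}$ can be empty while the limit points all lie in the boundary, and then there is no affine-bundle map $p\colon v\to v^{\CC^*}$ at all. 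Both issues vanish in the paper's local-slice formulation, because transversality (not orbit-invariance) is what is needed to compute the stalk, and the contraction is built into the slice rather than required of the ambient action. If you want to pursue a $\CC^*$-contraction route, you should contract a slice transversal to $v$ rather than $v$ itself; that is precisely the notion of ``contracting slice'' in \cite[\S2.3.2]{MS} that the paper invokes.
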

\begin{proof}
Work in $G/K$. According to \cite[\S6.4]{MS} (also see the comments at the end of \S1 in \cite{LV}), each $B$-orbit admits a contracting slice in the sense of \cite[\S2.3.2]{MS}. This implies purity (see \cite[\S2.3.2]{MS} or \cite[Lemma 4.5]{KL} or \cite[Proposition 1]{So89}). 
\end{proof}
%
%
Given an algebraic group $L$ acting on a variety $X$, set
\[ \Ext^{i}_{L}(-,-) = \Hom_{D_L(X)}(-,-[i]).\]
The Hodge modules $\cL^H_{\nu}$ endow each $\Ext^{\bullet}_{B\times K}(\cL_{\tau}, \cL_{\gamma})$ with a Hodge structure. 
\begin{thm}\label{main}$\Ext_{B\times K}^{\bullet}(\cL_{\tau}, \cL_{\gamma})$ is Tate and pure of weight $d_{\gamma}-d_{\tau}$.\footnote{
\textbf{Warning:} the non-equivariant analogue of this result is false!
}
\end{thm}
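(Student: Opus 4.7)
The plan is to compute $\Ext^{\bullet}_{B \times K}(\cL^H_{\tau}, \cL^H_{\gamma})$ via a Cousin-type spectral sequence associated to the $B \times K$-orbit stratification of $G$, and then conclude Tateness and purity by a weight argument fed by Propositions \ref{ptwise} and \ref{contracting}.

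First I would set up the spectral sequence $E_{1} \Rightarrow \Ext^{\bullet}_{B \times K}(\cL^H_{\tau}, \cL^H_{\gamma})$ arising from filtering $G$ by orbit closures. Its $E_{1}$-terms are, up to reindexing, of the form
$$\Ext^{\bullet}_{B \times K}\bigl((j_{v})^{*}\cL^H_{\tau},\, (j_{v})^{!}\cL^H_{\gamma}\bigr),$$
as $v$ ranges over $B \times K$-orbits with inclusions $j_{v} : v \hookrightarrow G$. By Proposition \ref{contracting}, $(j_{v})^{*}\cL^H_{\tau}$ is pure of weight $d_{\tau}$; by Verdier duality (using self-duality up to Tate twist of IC Hodge modules), $(j_{v})^{!}\cL^H_{\gamma}$ is pure of weight $d_{\gamma}$. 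By Proposition \ref{ptwise}, both are $\ast$-pointwise Tate.

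Second, on each orbit $v \simeq (B \times K)/H_{v}$, the equivariant Ext reduces to equivariant cohomology of a point for the isotropy $H_{v}$, namely
$$\Ext^{\bullet}_{B \times K}\bigl((j_{v})^{*}\cL^H_{\tau},\, (j_{v})^{!}\cL^H_{\gamma}\bigr) \;=\; \Ext^{\bullet}_{H_{v}}\bigl(\mathrm{pt};\, \text{stalks}\bigr).$$
Since $H_{v}$ is a linear algebraic group, $H^{\bullet}_{H_{v}}(\mathrm{pt})$ is a Tate, polynomial-type ring with degree-$i$ part of weight $i$. Combined with Tate-pure stalks of weights $d_{\tau}$ and $d_{\gamma}$, each $E_{1}$-entry is Tate pure of weight $d_{\gamma} - d_{\tau} + i$ in total degree $i$ — i.e., pure of weight $d_{\gamma}-d_{\tau}$ in the shifted convention where $[1]$ adds weight $1$. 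Since every $E_{1}$-term is Tate pure of this weight and differentials preserve weights, the abutment lies in the same weight and Tate sector; subquotients of Tate-pure structures of a fixed weight remain Tate pure (such structures form a semisimple category), giving the theorem.

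The main obstacle is controlling the weight count sharply on the $E_{1}$-page — in particular, verifying that the weights on the stalks of $(j_{v})^{*}\cL^H_{\tau}$ and $(j_{v})^{!}\cL^H_{\gamma}$ combine with the weights in $H^{\bullet}_{H_{v}}(\mathrm{pt})$ to give exactly the prescribed total weight, and not more (so that ``$\geq$'' becomes ``$=$''). A subtle point is that $H_{v}$ may be disconnected — by property (iv) of \S\ref{s:bgk} its component group has exponent $2$ — so one must confirm that taking finite-group invariants does not distort Tateness; this is automatic over $\RR$- or $\QQ$-coefficients, where cohomology of a finite group in positive degree vanishes.
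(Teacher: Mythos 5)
Your proposal is essentially the paper's own proof, with the same three ingredients assembled in the same order: filter equivariant $\Ext^{\bullet}$ by the orbit stratification to reduce to $\Ext^{\bullet}$ between $i^{*}\cL^H_{\tau}$ and $i^{!}\cL^H_{\gamma}$ on each orbit; invoke Propositions \ref{ptwise} and \ref{contracting} (plus self-duality up to twist) to see both restrictions are pure, Tate, and hence direct sums of shifted one-dimensional Tate variations; and finish with purity and Tateness of $H^{*}_{L}$ for $L$ the isotropy group. The extra care you take with the spectral-sequence bookkeeping and with the possibly disconnected isotropy is sound and makes explicit what the paper leaves as ``the assertion is immediate.''
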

\begin{proof}
Work in $G/K$. Filtering $\Ext^{\bullet}_B(\cL_{\tau}, \cL_{\gamma})$ by the orbit stratification, one sees that it suffices to argue that $\Ext_B^{\bullet}(i^*\cL_{\tau},i^!\cL_{\gamma}))$ is pure and Tate for each $B$-orbit inclusion $i\colon u\hookrightarrow G/K$. 
Proposition \ref{ptwise} and Proposition \ref{contracting} imply that both $i^*\cL_{\tau}^H$ and $i^!\cL_{\gamma}^H$ are direct sums of (shifted) one dimensional variations of Hodge structure. As $H^*_L$ is pure and Tate, for any linear algebraic group $L$ (cf. \cite[\S9.1]{D}), the assertion is immediate.
\end{proof}
%
\begin{cor}\label{extparity}
$\Ext_{B\times K}^{i}(\cL_{\tau}, \cL_{\gamma}) = 0$ unless $i = d_{\tau} + d_{\gamma}\mod 2$.
\end{cor}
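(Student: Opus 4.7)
The plan is to deduce the parity vanishing as a direct consequence of Theorem \ref{main}, with the only real content being a careful interpretation of what purity of $\Ext^{\bullet}$ means degree by degree.

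First I would unpack the statement of Theorem \ref{main}. The object $\cL^H_{\tau}$ is pure of weight $d_{\tau}$ and $\cL^H_{\gamma}[i]$ is pure of weight $d_{\gamma}+i$, so that under the standard weight conventions the graded piece $\Ext^i_{B\times K}(\cL_{\tau},\cL_{\gamma})$ carries a Hodge structure pure of weight $(d_{\gamma}+i)-d_{\tau} = d_{\gamma}-d_{\tau}+i$. This is exactly the content of ``$\Ext^{\bullet}$ pure of weight $d_{\gamma}-d_{\tau}$'' once one remembers the shift in each cohomological degree.

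Next I would invoke the Tateness clause of Theorem \ref{main}: a Tate Hodge structure is, by definition, a successive extension of Hodge structures of type $(n,n)$, and any such pure structure of weight $w$ must have $w=2n$ even, else it is forced to vanish. Applying this to the $i$-th $\Ext$ group, which is simultaneously Tate and pure of weight $d_{\gamma}-d_{\tau}+i$, forces the integer $d_{\gamma}-d_{\tau}+i$ to be even whenever the group is non-zero.

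Finally, since $d_{\tau}\equiv -d_{\tau}\pmod 2$, the congruence $d_{\gamma}-d_{\tau}+i\equiv 0\pmod 2$ is equivalent to $i\equiv d_{\tau}+d_{\gamma}\pmod 2$, which is the desired parity statement. There is no real obstacle here; the only thing to be careful about is the weight bookkeeping in the reinterpretation of Theorem \ref{main} as a statement about each individual $\Ext^i$, and the elementary observation that Tate implies even weight.
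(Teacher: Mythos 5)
Your argument is correct and is exactly what the paper intends: the corollary is stated without proof precisely because it falls out of Theorem \ref{main} by the weight bookkeeping you describe (each $\Ext^i$ is pure of weight $d_\gamma - d_\tau + i$, Tate forces that weight to be even, and $-d_\gamma \equiv d_\gamma \pmod 2$). Your careful spelling out of the convention behind ``$\Ext^{\bullet}$ pure of weight $d_\gamma - d_\tau$'' — namely that $\Ext^i$ then carries weight $d_\gamma - d_\tau + i$ — is the right reading, consistent with the fact that $H^*_L$ is called pure while $H^{2n}_L$ has Hodge type $(n,n)$.
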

\begin{cor}\label{icparity}
$H^*_{B\times K}(G; \cL_{\tau})$ vanishes in either all even or all odd degrees.
\end{cor}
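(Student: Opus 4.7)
The plan is to reduce this parity statement to Corollary \ref{extparity} by recognizing the constant sheaf on $G$ itself as one of the $\cL_\nu$ (up to a shift).

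First I would rewrite the cohomology as an Ext group:
\[ H^i_{B\times K}(G;\cL_\tau) = \Ext^i_{B\times K}(\const{\RR}_G, \cL_\tau). \]
The essential point is that $\const{\RR}_G$ is itself, up to shift, an IC sheaf of the form $\cL_\nu$. Indeed, by \S\ref{s:bgk} the $B \times K$-action on $G$ has only finitely many orbits; since $G$ is smooth and connected, there is a unique open dense orbit $w_0 \subseteq G$, of dimension $\dim G$. The trivial $B \times K$-equivariant local system $\const{\RR}_{w_0}$ is irreducible, and its IC-extension is $\const{\RR}_G[\dim G]$ (because $G$ is smooth, so the constant perverse sheaf is automatically IC on the whole of $G$). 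Hence, in the notation of \S\ref{s:bgk}, $\const{\RR}_G[\dim G] = \cL_{\tau_0}$ with $d_{\tau_0} = \dim G$.

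With this identification in hand, a routine degree chase gives
\[ H^i_{B\times K}(G;\cL_\tau) = \Ext^{i+\dim G}_{B\times K}(\cL_{\tau_0}, \cL_\tau). \]
By Corollary \ref{extparity}, the right-hand side vanishes unless $i + \dim G \equiv d_{\tau_0} + d_\tau \pmod 2$; since $d_{\tau_0} = \dim G$, this simplifies to $i \equiv d_\tau \pmod 2$. So $H^\bullet_{B\times K}(G;\cL_\tau)$ is concentrated in degrees of a single parity (that of $d_\tau = \dim w$), which is exactly the claim.

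There is no real obstacle here: the argument is a clean reindexing on top of Corollary \ref{extparity}. The only point worth being careful about is the identification of $\const{\RR}_G[\dim G]$ with $\cL_{\tau_0}$, which rests on (i)\ finiteness of the orbit set together with irreducibility of $G$ to produce the open dense orbit, and (ii)\ smoothness of $G$ to conclude that the IC-extension of the trivial local system from $w_0$ is just $\const{\RR}_G[\dim G]$.
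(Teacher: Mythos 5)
Your proposal is correct and is essentially the deduction the paper intends: the paper states Corollary~\ref{icparity} without proof, and the natural route is exactly to identify $\const{\RR}_G[\dim G]$ with $\cL_{\tau_0}$ (the IC of the trivial local system on the unique open dense $B\times K$-orbit, which equals the shifted constant sheaf since $G$ is smooth and irreducible) and then reindex to invoke Corollary~\ref{extparity}. The argument is sound, and it even gives the slightly sharper conclusion that the nonvanishing parity is that of $d_\tau$.
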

Some remarks are in order:
\begin{enumerate}
\item Corollary \ref{extparity} should be compared with \cite[Conjecture 4.2.6]{So}; also see \S\ref{s:formality}.
\item Corollary \ref{icparity} is essentially contained in \cite{LV}. Lusztig-Vogan work in the non-equivariant $\ell$-adic setting, but the Hecke algebra computations in \cite{LV} can be used to obtain Corollary \ref{icparity}; also see \S\ref{s:hecke}. Note that Lusztig-Vogan rely on explicit calculations with the Hecke algebra and arguments from representation theory. An argument analogous to \cite{LV}, involving Hecke algebra computations (but no representation theory), can be found in \cite{MS}.
\item Let $P\supseteq B$ be a parabolic subgroup. One should be able to obtain similar results for the analogous $P\times K$-action on $G$ using the technique of \cite{So89}.
\end{enumerate}
%
%
\subsection{Hecke algebra}\label{s:hecke}
Let $L\subseteq G$ be a closed subgroup (we are mainly interested in $L=B$ or $K$).
Let $B\times L$ act on $G$ via $(b,l)\cdot g= bgl^{-1}$. Define a bifunctor 
\[ - \conv - \colon DM_{B\times B}(G) \times DM_{B\times L}(G) \to DM_{B\times L}(G), \]
 called \emph{convolution}, as follows.
For $M\in DM_{B\times B}(G)$, $N\in DM_{B\times L}(G)$, the object $M\boxtimes N$ descends to an object $M\ttimes N \in DM_{B\times L}(G\qtimes{B}G)$. Set $M \conv N = m_!(M\ttimes N)$,
where $m\colon G\qtimes{B} G \to G$ is the map induced by multiplication.
This operation is associative in the evident sense. As $m$ is projective, convolution adds weights and commutes with Verdier duality (up to shift and Tate twist).
%
%
%
%

Taking $L=B$ yields a monoidal structure on $DM_{B\times B}(G)$.
For each $w\in W$, set
\[ \T_w = j_{w!}\const{Bw B} \quad \mbox{and}\quad \C_w = (j_{w!*}\const{Bw B}[\dim(BwB)])[-\dim(BwB)], \]
where $j_w\colon Bw B \hookrightarrow G$ is the inclusion, and $\const{B w B}$ denotes the trivial (weight $0$) variation of Hodge structure on $B w B$. The unit for convolution is $\11 = \T_e$. 
\begin{prop}\label{braid}The $\T_w$ satisfy the braid relations. That is,
if $\ell(vw) = \ell(v) + \ell(w)$, then $\T_v\conv \T_w = \T_{vw}$,
where $\ell\colon W\to \ZZ_{\geq 0}$ is the length function.
\end{prop}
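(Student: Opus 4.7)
The plan is to unwind the definition of convolution and reduce the claim to a standard geometric fact: when $\ell(vw) = \ell(v) + \ell(w)$, the restriction of the multiplication map $m$ to $BvB \qtimes{B} BwB$ is an isomorphism onto $BvwB$.

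First, by construction $\T_v \boxtimes \T_w$ is the $!$-extension of the trivial weight-zero variation of Hodge structure from $BvB \times BwB \subseteq G \times G$; descending to $G \qtimes{B} G$, the object $\T_v \ttimes \T_w$ is the $!$-extension of the trivial variation from the image of $BvB \times BwB$, namely $BvB \qtimes{B} BwB$. Granting the geometric claim, proper base change along the isomorphism $BvB \qtimes{B} BwB \xrightarrow{\sim} BvwB$ (together with the compatibility of $!$-pushforward and $!$-extension along the complementary locally closed immersion $j_{vw}$) yields
\[ \T_v \conv \T_w = m_!(\T_v \ttimes \T_w) = j_{vw!}\const{BvwB} = \T_{vw}. \]

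The geometric claim is the main (and essentially only) obstacle. Surjectivity of the restricted multiplication is the classical Bruhat identity $BvB \cdot BwB = BvwB$ under length additivity. A dimension count gives $\dim(BvB \qtimes{B} BwB) = \ell(v) + \ell(w) + \dim B = \ell(vw) + \dim B = \dim BvwB$, so the map is generically finite. One upgrades to an isomorphism via a Bott-Samelson resolution: concatenating reduced expressions for $v$ and $w$ gives a reduced expression for $vw$, and the resulting Bott-Samelson variety factors through $BvB \qtimes{B} BwB$, restricting to an isomorphism on the open stratum. Once this geometric statement is in hand, the Hodge-theoretic bookkeeping is formal, since both sides of the target identity are tautologically the $!$-extension of the trivial weight-zero variation on $BvwB$.
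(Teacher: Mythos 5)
Your proof is correct and rests on exactly the same key fact as the paper's one-line proof, namely that multiplication induces an isomorphism $BvB \qtimes{B} BwB \mapright{\sim} BvwB$ when $\ell(vw)=\ell(v)+\ell(w)$; the paper simply states this and stops, while you additionally sketch a justification and spell out the formal $!$-pushforward bookkeeping.
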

\begin{proof}
Multiplication yields an isomorphism $B v B \qtimes{B} B w B \mapright{\sim} B v  wB$.
 \end{proof}
%
%
\begin{prop}\label{s:convpi}
Let $s\in W$ be a simple reflection, and let $\pi$ be as in \eqref{defpi}. Then, under the equivalence $DM_{B\times K}(G) \mapright{\sim} DM_K(G/B)$, convolution with $\C_s$ is identified with $\pi^*\pi_*$.
\end{prop}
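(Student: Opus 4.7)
The plan is to identify $\C_s$ concretely, to recast the convolution $\C_s \star N$ via a Hecke-type correspondence on $G/B$, and then to invoke proper base change.

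First, the closure $\overline{BsB} \subseteq G$ equals the minimal parabolic $P \supseteq B$ corresponding to $s$, and $P$ is smooth of dimension $\dim(BsB)$. So the IC extension appearing in the definition of $\C_s$ is simply the constant variation on $P$ with a cohomological shift, and unwinding the conventions gives $\C_s = \underline{P}$ in $DM_{B\times B}(G)$ --- the constant (weight zero) Hodge module supported on $P \subseteq G$, concentrated in cohomological degree zero.

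Next, introduce the Hecke correspondence: let $Z_s := G/B \times_{G/P} G/B$ with projections $p_1, p_2\colon Z_s \to G/B$. The square
\[
\xymatrix{
Z_s \ar[d]_{p_1} \ar[r]^{p_2} & G/B \ar[d]^{\pi} \\
G/B \ar[r]^{\pi} & G/P
}
\]
is cartesian, and $\pi$ is a proper $\PP^1$-bundle; proper base change therefore gives $\pi^*\pi_* = p_{1!}p_2^*$ on $DM_K(G/B)$.

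The crux is to verify that $\C_s \star N$ corresponds, under the equivalence $DM_{B\times K}(G) \cong DM_K(G/B)$, to $p_{1!}p_2^*\bar N$ (where $\bar N$ is the image of $N$). Using the identification of $\C_s$ above, the convolution $\C_s\star N = m_!(\underline{P}\ttimes N)$ is supported on $P\qtimes{B} G$, with $m\colon P\qtimes{B} G \to G$ the multiplication. The assignment $[p,g]\mapsto (g^{-1}B, (pg)^{-1}B)$ realizes $P\qtimes{B}G$ as the Hecke correspondence $Z_s$, compatibly with the equivalence (realized by $g\mapsto g^{-1}B$). Under this identification, $m$ corresponds to $p_2$, the second-factor projection $[p,g]\mapsto g$ (well-defined after descent to $G/B$) corresponds to $p_1$, and $\underline{P}\ttimes N$ --- essentially the pullback of $N$ along the second factor, since $\underline{P}$ is constant --- descends to $p_1^*\bar N$. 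Thus $\C_s \star N$ corresponds to $p_{2!}p_1^*\bar N = p_{1!}p_2^*\bar N = \pi^*\pi_*\bar N$, the middle equality being the swap symmetry of $Z_s$.

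The main obstacle is in the final step: carefully matching the convolution diagram on $G$ to the Hecke correspondence on $G/B$ and checking that $\underline{P}\ttimes N$ descends to $p_1^*\bar N$. The underlying geometry is transparent --- both constructions reflect the $\PP^1$-bundle $\pi\colon G/B \to G/P$ --- but the bookkeeping requires tracking the various $B$-equivariance conventions through the specific equivalence $DM_{B\times K}(G) \cong DM_K(G/B)$ in use.
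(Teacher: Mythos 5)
Your proof is correct, and it is essentially the argument the paper intends: the paper's own ``proof'' is simply a pointer to \cite[Lemma 3.2.1]{So00}, whose content in the Schubert-variety setting is exactly this proper-base-change argument on the Hecke correspondence $Z_s = G/B\times_{G/P}G/B$, transported here through the quotient equivalence $g\mapsto g^{-1}B$. Your identification $\C_s=\underline{P}$, the matching of $P\qtimes{B}G$ with $Z_s$, the descent of $\underline{P}\ttimes N$ to a pullback of $\bar N$, and the final appeal to the swap symmetry of $Z_s$ are all accurate, and they supply the bookkeeping the paper leaves to the reader.
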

\begin{proof}
Left to the reader (see \cite[Lemma 3.2.1]{So00}).
\end{proof}
Let $\HH_q\subseteq  DM_{B\times B}(G)$ be the triangulated subcategory generated by the $\C_w$, $w\in W$, and Tate twists thereof.
\begin{prop}$\HH_q$ is stable under convolution.
\end{prop}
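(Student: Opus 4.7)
The plan is to reduce the statement to a single computation of $\C_s \conv \C_w$ for $s$ a simple reflection, and then bootstrap via induction. Since $-\conv-$ is triangulated in each argument and compatible with Tate twists, it suffices to show $\C_v \conv \C_w \in \HH_q$ for all $v, w \in W$; the base case $v = e$ is immediate because $\C_e = \11$ is the convolution unit.

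For the inductive step I would use the Bott-Samelson presentation. Writing $v = sv'$ reduced with $s$ simple, the multiplication map $P_s \qtimes{B} \overline{Bv'B} \to \overline{BvB}$ is proper and birational. Saito's decomposition theorem applied to the pure object $\C_s \ttimes \C_{v'}$ then yields a splitting $\C_s \conv \C_{v'} = \C_v \oplus \bigoplus_i \C_{y_i}(n_i)[k_i]$ with each $y_i < v$. Convolving on the right with $\C_w$ and using bilinearity of $\conv$ rearranges to
\[ (\C_v \conv \C_w) \oplus \bigoplus_i (\C_{y_i} \conv \C_w)(n_i)[k_i] \;=\; \C_s \conv (\C_{v'} \conv \C_w). \]
Granting the key claim that $\C_s \conv \HH_q \subseteq \HH_q$, the right-hand side lies in $\HH_q$: indeed $\C_{v'} \conv \C_w \in \HH_q$ by induction on $\ell(v')$, and then $\C_s \conv (-)$ keeps it in $\HH_q$. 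Each summand $(\C_{y_i} \conv \C_w)(n_i)[k_i]$ on the left also lies in $\HH_q$ by induction (since $\ell(y_i) < \ell(v)$). Hence $\C_v \conv \C_w$, being a summand of an object in $\HH_q$ with complement in $\HH_q$, lies in $\HH_q$.

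The heart of the proof is the key claim $\C_s \conv \C_w \in \HH_q$. Write $\C_s \conv \C_w = m_!(\C_s \ttimes \C_w)$. The external object is pure of weight $\dim(BsB) + \dim(BwB)$ on $G \qtimes{B} G$; the restriction of $m$ to $\overline{BsB} \qtimes{B} \overline{BwB}$ is proper, and since $m_!$ adds weights the convolution $\C_s \conv \C_w$ is pure. Saito's decomposition theorem then splits it into simple $B \times B$-equivariant perverse Hodge modules with shifts and Tate twists. Every $B \times B$-orbit in $G$ is a Bruhat cell $BuB$, and the stabilizer of $u \in BuB$ in $B \times B$ is isomorphic, via the second projection, to the connected group $u^{-1}Bu \cap B$. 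Consequently the only irreducible $B \times B$-equivariant variation of Hodge structure on $BuB$ is the trivial one (up to Tate twist), so the simple summands of $\C_s \conv \C_w$ are precisely Tate twists and shifts of $\C_u$'s. Therefore $\C_s \conv \C_w \in \HH_q$, completing the argument.

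The principal obstacle is the purity and decomposition step: one must rely on Saito's decomposition theorem in the equivariant mixed Hodge module framework, and on the compatibility of $m_!$ with the twisted external product on $G \qtimes{B} G$. Once those foundational inputs are granted, the induction and the orbit/stabilizer analysis are mechanical.
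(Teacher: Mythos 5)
Your proof is correct, but it takes a genuinely different route from the paper's. The paper either cites \cite[Proposition~3.4.6]{So} directly, or observes that under the chain of equivalences $DM_{B\times B}(G)\simeq DM_B(G/B)\simeq DM_G(G/B\times G/B)$ one is back in the $B\backslash G/K$ setting (for the group $G\times G$ with swap involution, so $K=\mathrm{diag}(G)$), and then simply invokes Lemma~\ref{s:tatepistable} (pointwise-Tate is preserved by $\pi^*\pi_*$) together with Proposition~\ref{s:convpi} ($\C_s\conv-$ is $\pi^*\pi_*$). Your argument instead works directly on $G$: purity of the external product, properness of $m$ (so $m_!=m_*$ preserves purity), Saito's decomposition theorem, and the observation that the stabilizer in $B\times B$ of a point of a Bruhat cell is the connected group $B\cap u^{-1}Bu$, so the only irreducible equivariant variation on a cell is Tate. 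This buys you a self-contained proof that does not lean on the $K$-orbit machinery of \S\ref{s:bgk}--\S\ref{s:mixed}, at the cost of re-proving purity/decomposition by hand rather than inheriting it from the pointwise-Tate lemma; the paper's route is shorter in context precisely because it recycles the tools already built for Theorem~\ref{main}. A small stylistic note: your ``key claim'' argument applies verbatim to any pair $(v,w)$, not just $(s,w)$, so the Bott--Samelson induction in the middle of your write-up is not actually needed --- the purity-plus-decomposition-plus-stabilizer step proves $\C_v\conv\C_w\in\HH_q$ directly for all $v,w$.
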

\begin{proof}
This follows from \cite[Proposition 3.4.6]{So}. Alternatively, note
\[ DM_{B\times B}(G) \mapright{\sim} DM_B(G/B) \mapright{\sim} DM_{G}(G\qtimes{B} G/B) \mapright{\sim}DM_G(G/B\times G/B).\]
This puts us in the setting of the previous sections (with group $G\times G$ and involution $\theta(g_1,g_2)=(g_2,g_1)$). Now use Lemma \ref{s:tatepistable} and Proposition \ref{s:convpi}.
\end{proof}
\begin{cor}Each $\T_w$ is in $\HH_q$.
\end{cor}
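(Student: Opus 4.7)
The plan is to reduce to the case of simple reflections and then exhibit $\T_s$ explicitly as an extension of two objects already in $\HH_q$. By Proposition \ref{braid}, for any $w\in W$ with reduced expression $w = s_1\cdots s_n$, one has $\T_w = \T_{s_1}\conv\cdots\conv \T_{s_n}$. Since $\HH_q$ was just shown to be stable under convolution, it therefore suffices to prove $\T_s\in \HH_q$ for each simple reflection $s\in W$.

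Next I would compute $\C_s$ directly. The closure $\overline{BsB}$ equals the minimal parabolic $P_s\supseteq B$, which is a \emph{smooth} variety of dimension $d:=\dim(BsB)$. Consequently the intermediate extension in the definition of $\C_s$ collapses to the constant sheaf: $\C_s \simeq \const{P_s}$ (viewed as an object of $DM_{B\times B}(G)$, with trivial Tate twist, using that the constant variation of Hodge structure on $BsB$ extends constantly across $B$).

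Now let $j\colon BsB\hookrightarrow P_s$ be the open inclusion and $i\colon B\hookrightarrow P_s$ the closed complement. Applying the localization triangle $j_!j^*\to \id\to i_*i^*$ to $\const{P_s}$ gives a distinguished triangle in $DM_{B\times B}(G)$
\[ j_!\const{BsB} \to \const{P_s} \to i_*\const{B} \to j_!\const{BsB}[1]. \]
The left term is $\T_s$ by definition; the middle term is $\C_s$; and the right term is $\T_e = \C_e$, because $B = BeB$ is a single Bruhat cell, on which $\C_e$ and $\T_e$ coincide with the constant sheaf. Hence $\T_s$ sits in a triangle between two generators of $\HH_q$, so $\T_s\in \HH_q$, completing the induction.

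The only step that requires any care is the identification $\C_s\simeq \const{P_s}$ at the level of \emph{mixed} Hodge modules (rather than merely the underlying perverse sheaves), and checking that the triangle above carries no hidden Tate twist. This is immediate from the smoothness of $P_s$, the fact that the trivial variation of Hodge structure on $BsB$ is taken as the input to the IC construction, and the triviality of $i^*$ on constant Hodge modules (the shift/twist anomaly lives in $i^!$, not $i^*$).
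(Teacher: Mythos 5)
Your argument is correct and is essentially the paper's own proof, spelled out in full. The paper reduces to simple reflections via Proposition \ref{braid} (tacitly using the stability of $\HH_q$ under convolution, which you make explicit), and then simply writes the distinguished triangle $\11[-1]\to\T_s\to\C_s\leadsto$; your triangle $\T_s\to\C_s\to\T_e\to\T_s[1]$ is the same one, rotated, and your derivation of it from $\C_s\simeq\const{P_s}$ together with the open/closed decomposition $BsB\sqcup B = P_s$ is exactly the justification the paper leaves implicit. The remark that the Tate twist bookkeeping is trivial because $i^*$ (not $i^!$) is applied to a constant Hodge module is a correct and worthwhile sanity check.
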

\begin{proof}In view of Proposition \ref{braid}, it suffices to prove this for each simple reflection $s$. In this case we have a distinguished triangle 
$\11[-1] \to \T_s \to \C_s \leadsto$
\end{proof}
Let $\MM_q\subseteq DM_{B\times K}(G)$ be the triangulated subcategory generated by the $\cL^H_{\tau}$ and Tate twists thereof.
\begin{thm}\label{mqstable}$\MM_q$ is stable under convolution with objects of $\HH_q$.
\end{thm}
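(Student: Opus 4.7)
The plan is to reduce to convolution with the Kazhdan--Lusztig generators $\C_s$ for simple reflections $s$, invoke Proposition \ref{s:convpi} to interpret $\C_s \conv (-)$ as $\pi^*\pi_*$, and then combine the decomposition theorem for pure Hodge modules with the $\ast$-pointwise Tate control furnished by Proposition \ref{ptwise} and Lemma \ref{s:tatepistable}.

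First I would reduce to the case of simple reflections. Since convolution is triangulated in each variable and commutes with Tate twists, and since $\HH_q$ is triangulated and stable under Tate twists, it suffices to show $\C_w \conv \cL^H_\tau \in \MM_q$ for every $w \in W$ and every $\tau$. For $w = s_1 \cdots s_k$ a reduced expression, $\C_w$ appears as a direct summand of $\C_{s_1} \conv \cdots \conv \C_{s_k}$ (the remaining summands being $\C_v$'s with $v<w$, equipped with Tate twists). Assuming $\MM_q$ is thick (or passing to its thick closure), an induction on $\ell(w)$ then reduces the question to showing $\C_s \conv \cL^H_\tau \in \MM_q$ for every simple reflection $s$.

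By Proposition \ref{s:convpi} this is equivalent to $\pi^*\pi_*\cL^H_\tau \in \MM_q$, where $\pi \colon G/B \to G/P_s$. Because $\cL^H_\tau$ is pure and $\pi$ is smooth and proper, $\pi^*\pi_*\cL^H_\tau$ is pure, and the decomposition theorem gives
\[ \pi^*\pi_*\cL^H_\tau \ \cong\ \bigoplus_{\gamma, n} \cL^H_\gamma[-n] \otimes H_{\gamma, n}, \]
with each $H_{\gamma, n}$ a pure Hodge structure (multiplicity space). It would then suffice to prove that every $H_{\gamma, n}$ is Tate, since in that case each summand breaks into shifted Tate twists of $\cL^H_\gamma$, visibly in $\MM_q$.

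This Tate-ness of multiplicities is the main obstacle. By Proposition \ref{ptwise}, $\cL^H_\tau$ is $\ast$-pointwise Tate; by Lemma \ref{s:tatepistable} so is $\pi^*\pi_*\cL^H_\tau$. I would induct on orbits in the closure order, from larger to smaller. At a point $x$ in an orbit $w_\gamma$, the contributions to the decomposition from indices $\gamma'$ with $w_{\gamma'} \supsetneq w_\gamma$ already have Tate multiplicity spaces by the inductive hypothesis, so the surviving contribution to the stalk is $\bigoplus_n V^H_{\gamma, x}[d_\gamma - n] \otimes H_{\gamma, n}$. Since $\cL^H_\gamma$ is itself $\ast$-pointwise Tate (Proposition \ref{ptwise}), the weight-zero variation $V^H_\gamma$ is pointwise of pure Hodge type $(0,0)$; the Tate-ness of the stalk of $\pi^*\pi_*\cL^H_\tau$ at $x$ then forces each $H_{\gamma, n}$ to be Tate, completing the induction.
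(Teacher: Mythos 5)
Your proposal is correct and follows essentially the same route the paper intends: interpret $\C_s \conv (-)$ as $\pi^*\pi_*$ (Proposition~\ref{s:convpi}), use Lemma~\ref{s:tatepistable} for pointwise-Tate control, and invoke the decomposition theorem for pure Hodge modules to land back in $\MM_q$. The paper's proof is a one-line hint (``Combine Lemma~\ref{s:tatepistable} with Proposition~\ref{s:convpi}''), and your write-up supplies exactly the missing intermediate steps, in particular that purity plus $\ast$-pointwise Tateness forces the multiplicity Hodge structures in the decomposition to be Tate, which is what places $\pi^*\pi_*\cL^H_\tau$ in $\MM_q$.

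One small wrinkle worth flagging: your reduction from general $\C_w$ to the $\C_s$'s goes via direct summands of $\C_{s_1}\conv\cdots\conv\C_{s_k}$ and thus requires $\MM_q$ to be thick (closed under summands), which does not follow formally from its definition as the triangulated subcategory generated by the $\cL^H_\tau$ and Tate twists. You acknowledge this, but it can be avoided entirely: each $\C_w$ is an iterated cone of shifted Tate twists of the standard objects $\T_v$, $v\le w$ (by recollement along the Bruhat stratification of $\overline{BwB}$); each $\T_v$ is a convolution $\T_{s_1}\conv\cdots\conv\T_{s_k}$ by Proposition~\ref{braid}; and each $\T_s$ sits in the triangle $\11[-1]\to\T_s\to\C_s$. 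So it suffices to know that $\C_s\conv(-)$ preserves $\MM_q$ for simple $s$, with no appeal to thickness. This is the cleaner version of the reduction and matches the paper's generation-by-$\C_w$ framing of $\HH_q$.
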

\begin{proof}
Combine Lemma \ref{s:tatepistable} with Proposition \ref{s:convpi}.
\end{proof}
Let $H_q=K_0(\HH_q)$ and $M_q = K_0(\MM_q)$ be the respective Grothendieck groups. These are free $\ZZ[q^{\pm 1}]$-modules via
$q[\cA] = [\cA(-1)]$, where $(-1)$ is the inverse of Tate twist.
Convolution makes $H_q$ a $\ZZ[q^{\pm 1}]$-algebra, and $M_q$ an $H_q$-module.
\begin{cor}\label{positivity}
The coefficients $c^w_{\gamma, \tau}(q)$ in the expansion
\[ (-1)^{d_{\tau}}[\C_w\conv \cL^H_{\tau}] = \sum_{\gamma} (-1)^{d_{\gamma}}c^w_{\gamma,\tau}(q)[\cL^H_{\gamma}], \]
are polynomials in $q^{\pm 1}$ with non-negative coefficients.
\end{cor}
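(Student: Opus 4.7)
The plan is to combine Theorem \ref{mqstable} with the decomposition theorem for pure Hodge modules and a weight-matching argument. The key observation is that once one knows $\C_w \conv \cL^H_\tau$ is a pure object lying in $\MM_q$, the decomposition theorem forces a very rigid structure on it, and pure objects in $\MM_q$ only admit summands whose weights match in a way that enforces a parity constraint.

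Concretely, I would first record the relevant purities: with the normalization used in the paper, $\C_w = j_{w!*}\const{BwB}[\dim BwB])[-\dim BwB]$ is pure of weight $0$, while $\cL^H_\tau$ is pure of weight $d_\tau$ (its intermediate extension of the weight-zero variation $V^H_\tau[d_\tau]$, shifted into perverse degree $0$). External product of pure objects is pure, so $\C^H_w\ttimes \cL^H_\tau$ is pure of weight $d_\tau$ on $G\qtimes{B}G$. Since the convolution map $m$ is projective, $m_!$ preserves purity (Saito's decomposition theorem), so $\C_w\conv \cL^H_\tau$ is pure of weight $d_\tau$ in $DM_{B\times K}(G)$; equivalently, each $\pH^i(\C_w \conv \cL^H_\tau)$ is a semisimple pure Hodge module of weight $d_\tau + i$, and
\[ \C_w\conv \cL^H_\tau \;\cong\; \bigoplus_i \pH^i(\C_w\conv \cL^H_\tau)[-i]. \]

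Next I would invoke Theorem \ref{mqstable}: the object $\C_w\conv \cL^H_\tau$ lies in $\MM_q$, and so does each perverse cohomology $\pH^i(\C_w\conv \cL^H_\tau)$. Being a semisimple pure Hodge module in $\MM_q$, each $\pH^i$ decomposes as a direct sum $\bigoplus_{\gamma,n} \cL^H_\gamma(n)^{\oplus m_{i,\gamma,n}}$ with $m_{i,\gamma,n}\in\ZZ_{\geq 0}$. The Tate-twisted simple $\cL^H_\gamma(n)$ has weight $d_\gamma - 2n$, so matching it against the weight $d_\tau + i$ of $\pH^i$ forces $d_\gamma - 2n = d_\tau + i$; in particular, whenever $m_{i,\gamma,n}>0$ one has $i\equiv d_\gamma - d_\tau \pmod 2$.

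Finally, passing to the Grothendieck group, $[\cL^H_\gamma(n)] = q^{-n}[\cL^H_\gamma]$ by the definition of the $\ZZ[q^{\pm 1}]$-action, and
\[ [\C_w\conv \cL^H_\tau] \;=\; \sum_{i,\gamma,n} (-1)^i m_{i,\gamma,n}\, q^{-n} [\cL^H_\gamma]. \]
On those triples $(i,\gamma,n)$ that contribute, the parity constraint gives $(-1)^i = (-1)^{d_\gamma - d_\tau}$, so $(-1)^{d_\tau}(-1)^i = (-1)^{d_\gamma}$. Comparing with the defining expansion of $c^w_{\gamma,\tau}(q)$ yields
\[ c^w_{\gamma,\tau}(q) \;=\; \sum_{i,n} m_{i,\gamma,n}\, q^{-n}, \]
a Laurent polynomial with non-negative integer coefficients.

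The only real obstacle is bookkeeping: one must be careful that Saito's decomposition theorem applies in the equivariant setting (this is standard, since $m$ is projective and equivariant for $B\times K$), and that the weight conventions for $\C_w$, for the intermediate extension, and for the Tate twist all line up to produce exactly the parity $i\equiv d_\gamma - d_\tau \pmod 2$ that cancels the signs $(-1)^{d_\tau}$ and $(-1)^{d_\gamma}$ in the defining expansion. Everything else is formal consequence of Theorem \ref{mqstable} and the fact that semisimple pure Hodge modules in $\MM_q$ are sums of Tate twists of the $\cL^H_\gamma$.
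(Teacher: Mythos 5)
Your proof is correct and, as far as one can tell from the paper (which states Corollary~\ref{positivity} without giving an explicit argument), it fleshes out exactly the reasoning the author has in mind: purity of $\C_w$ and $\cL^H_\tau$, preservation of purity by $m_!$ for the projective map $m$, the decomposition theorem to split $\C_w\conv\cL^H_\tau$ into shifted semisimple pieces, Theorem~\ref{mqstable} to identify the simple summands as Tate twists $\cL^H_\gamma(n)$, and the weight-parity bookkeeping that cancels the signs $(-1)^{d_\tau}$ and $(-1)^{d_\gamma}$. The only point worth spelling out a bit more, which you use tacitly, is that a simple pure Hodge module in $\MM_q$ must be of the form $\cL^H_\gamma(n)$; this follows because the underlying perverse sheaf of any such simple is an equivariant $\IC$ along the orbit stratification, hence some $\cL_\gamma$, and the polarizable Hodge structure on the irreducible local system $V_\gamma$ is unique up to Tate twist (as the paper records in \S\ref{s:mixed}), so the Hodge module itself is determined up to Tate twist via intermediate extension.
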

 The algebra $H_q$ is isomorphic to the Iwahori-Hecke algebra associated to $W$. That is, $H_q$ is isomorphic to the $\ZZ[q^{\pm 1}]$ algebra on generators $T_w$, $w\in W$, with relations: $T_vT_w=T_{vw}$ if $\ell(vw) = \ell(v)+\ell(w)$; and $(T_s+1)(T_s-q)=0$ if $\ell(s)=1$.
The isomorphism is given by $T_w\mapsto [\T_w]$.
The convolution product $\C_w\conv \cL^H_{\tau}$, the groups $H^*_{B\times K}(G; \cL_{\tau})$, $\Ext^{\bullet}(\cL_{\tau}, \cL_{\gamma})$, $\Ext^{\bullet}(j_!V_{\tau}, \cL_{\gamma})$, can all be explicitly computed via the module $M_q$:
$\C_w\star \cL^H_{\tau}$ because of Theorem \ref{positivity}; the rest because they are pure and Tate (Theorem \ref{main}) and can
consequently be recovered from their weight polynomials. 
For an explicit description of $M_q$, see \cite{LV} and \cite{MS}.

Corollary \ref{positivity} appears to be new (although it might be possible to deduce it from the results of \cite{LV}). It is a generalization of the well known positivity result for the Kazhdan-Lusztig basis (the classes $[\C_w]$) in the Hecke algebra. 

\subsection{Informal remarks}\label{s:formality}
Let $\cA$ be an abelian category, $\mathrm{Ho}(\cA)$ the homotopy category of chain complexes in $\cA$, and $D(\cA)$ the derived category of $\cA$.
Given a collection of (bounded below) complexes $\{T_i\}$ each of whose components are injectives, set $T=\bigoplus_i T_i$. The complex $\cE = \End^{\bullet}_{\cA}(T)$ has an evident dg-algebra
structure. Let $e_i\in\cE$ denote the idempotent corresponding to projection on $T_i$. The functor $\Hom^{\bullet}(\cE, -)$ yields an equivalence between the full triangulated subcategory of $D(\cA)$ generated by the $T_i$ and the full triangulated subcategory of the dg-derived category $\dgDer{\cE}$ (of right dg $\cE$-modules) generated by the $e_i\cE$.

The dg-algebra $\cE$, and hence $D(\cA)$, becomes significantly more tractable if $\cE$ is \emph{formal}, i.e., quasi-isomorphic to its cohomology $H^*(\cE)$ (viewed as a dg-algebra with trivial differential).
In general, it can be difficult to establish formality. However, there is a criterion due to P. Deligne:
if $\cE$ is endowed with an additional $\ZZ$-grading
$\cE^i = \bigoplus_{j\in \ZZ} \cE^{i,j}$
which is respected by the differential, and each $H^i(\cE)$ is concentrated in degree $i$ (for the additional grading), then $\cE$ is formal.

In the setting of the previous sections, let $\cL = \bigoplus_{\tau}\cL_{\tau}$ be the direct sum of 
the simple objects in $\Perv_{B\times K}(G)$. Let
$\cE = \Ext^{\bullet}_{B\times K}(\cL,\cL)$,
viewed as a dg-algebra with trivial differential.
Assume that the category $\MM_q$ of the previous section is the derived category of an abelian category containing enough injectives. 
Further, assume that the forgetful functor $\MM_q\to D_{B\times K}(G)$ yields a grading (via the weight filtration) in the sense of \cite[\S4]{BGS}.
Then, modulo some finiteness adjectives, Theorem \ref{main} and Deligne's criterion yield
$D_{B\times K}(G) \simeq \dgDer{\cE}$ (this is Soergel's Formality Conjecture). Conjectures 4.2.2, 4.2.3 and 4.2.6 of \cite{So} also follow.

Now $D_{B\times K}(G)$ is not the derived category of an abelian category, but this is not a serious problem for implementing the above argument. However, $\MM_q\to D_{B\times K}(G)$ simply does not yield a grading. The category of Hodge modules is too large. The issue is already visible over a point, since the category of Tate mixed Hodge structures is larger than the category of graded vector spaces. Further, isolating a suitable subcategory of $\MM_q$ seems to be quite difficult (cf. \cite[\S4.5]{BGS}).
W. Soergel has explained to me how combining the arguments of this note with a joint project of his and M. Wendt's on `motivic representation theory' (see \cite{SW}) should allow this basic idea to be carried through (also see footnote \ref{note}). 
This perspective is also explicit in \cite[\S4]{BGS} and \cite[\S G]{B}.

\end{document}